\documentclass[12pt]{article}
\usepackage{amsmath,amssymb,amsfonts,amsthm,graphicx,epsfig}
\usepackage[usenames,dvipsnames]{color}
\usepackage{hyperref}
\hypersetup{
    colorlinks=false, 
    linktoc=all,     
    linkcolor=blue,  
    linktocpage,
}

\newcommand{\old}[1]{}
\newcommand{\eps}{\varepsilon}

\newcommand{\yellow}[1]{}

\newcommand{\Q}{{\mathbb Q}}
\newcommand{\R}{{\mathbb R}}

\newcommand{\X}{{\bf X}}
\newcommand{\Y}{{\bf Y}}
\newcommand{\Z}{{\bf Z}}
\newcommand{\U}{{\bf U}}
\newcommand{\V}{{\bf V}}

\DeclareMathOperator{\argmin}{argmin}

\newtheorem{theorem}{Theorem}

\newtheorem{lemma}[theorem]{Lemma}

\usepackage{physics}
\usepackage{amsmath}
\usepackage{tikz}
\usepackage{mathdots}
\usepackage{yhmath}
\usepackage{cancel}
\usepackage{color}
\usepackage{siunitx}
\usepackage{array}
\usepackage{multirow}
\usepackage{amssymb}
\usepackage{gensymb}
\usepackage{tabularx}
\usepackage{extarrows}
\usepackage{booktabs}
\usetikzlibrary{fadings}
\usetikzlibrary{patterns}
\usetikzlibrary{shadows.blur}
\usetikzlibrary{shapes}
\usepackage{float}

\begin{document}
\date{}

\title
{Leading All The Way}
\author{
Peter E. Francis\thanks{Department of Mathematics, Stony Brook University,
Stony Brook, NY 11794. E-mail: {\tt peter.e.francis@stonybrook.edu}. Research
supported by Simons Foundation International, LTD.}
\and Evita Nestoridi \thanks{Department of Mathematics, Stony Brook University,
Stony Brook, NY 11794. E-mail: {\tt evrydiki.nestoridi@stonybrook.edu}. Research
supported by NSF grant DMS 2346986.}
\and Peter Winkler\thanks{Department of Mathematics, Dartmouth
College, Hanover, NH 03755. E-mail: {\tt peter.winkler@dartmouth.edu}.}
}

\maketitle


\begin{abstract}
Xavier and Yushi run a ``random race'' as follows.  A continuous probability distribution $\mu$ on the
real line is chosen.  The runners begin at zero.  At time $i$ Xavier draws $\X_i$ from $\mu$ and
advances that distance, while Yushi advances by an independent drawing $\Y_i$.  After $n$ such moves,
Xavier wins a valuable prize provided he not only wins the race but leads after every step; that is,
$\sum_{i=1}^k \X_i > \sum_{i=1}^k \Y_i$ for all $k = 1,2, \dots, n$.  What distribution is best for Xavier,
and what then is his probability of getting the prize?
\end{abstract}

\section{Introduction}
A natural measure of how close a partially ordered set $P$ is to being totally ordered is the 
probability $p$ that two random elements are comparable.  For some classes of posets (and
probability measures on them), $p$ may be determinable from very little information.
For example, if the ground set of $P$ is $\R^n$ with the coordinate partial order, and
elements are chosen coordinatewise by independent selection from a continuous distribution
$\mu$ on $\R$, then the probability that vectors $x$ and $y$ are comparable is $2^{1-n}$ independent of $\mu$.

We will see that the same phenomenon occurs when we consider the ubiquitous ``majorization'' order,
in which $u \le v$ iff $\sum_{i=1}^k x_i > \sum_{i=1}^k y_i$ for all $k = 1,2, \dots, n$.
This partial order is of great significance to the combinatorics of the symmetric group and to the
study of card shuffling. For instance, Diaconis and Shahshahani proved that the eigenvalues of the
transition matrix of the random transpositions card shuffle are monotone with respect to the majorization
order (see Lemma 10 of \cite{DiaSha}). This was a crucial observation to their seminal result that it takes
$\frac12 n \log n$ steps to shuffle a deck of $n$ cards via random transpositions.
The majorization order appears also in the Gale-Ryser Theorem \cite{G,R}: Given two partitions
$\alpha, \beta$ of the number $n$, there is a bipartite graph with one side having degree sequence given by
$\alpha$ and the other by $\beta$ if and only if $\alpha$ is majorized by the conjugate partition of $\beta$.

The independence of distribution for the majorization order is a consequence
of an identity concerning symmetric distributions that can be found, e.g., in \cite{B} (Corollary 7, p.\ 93).
We will provide a proof below. We also show, in contrast, that another event concerning
partial sums---that they alternate---has radically different probabilities for different distributions.

The race described in the abstract comes down to sampling two vectors in $\mathbb{R}^n$ according to
$\mu$ and determining the probability that one majorizes the other. Since the partial sums are
positively correlated, one would expect here that $p$ is larger than $2^{1-n}$. Indeed, it is now
$2{2n \choose n}/4^n \sim 2/\sqrt{\pi n}$, again independent of $\mu$.
In the language of the abstract, if $\X_1,\dots,\X_n$ are Xavier's steps (each an i.i.d.\ drawing
from $\mu$) in a random race and $\Y_1,\dots,\Y_n$ are Yushi's, then the probability that Xavier
not only wins the race but leads all the way is ${2n \choose n}/4^n$.

The independence-of-distribution phenomenon is easily verified for a variation of the above construction
in which we condition
on $\sum_{i=1}^n \X_i = \sum_{i=1}^n \Y_i$, that is, in the ``random race'' formulation of the abstract,
we assume the race ends in a tie.  In this case, the probability that $\sum_{i=1}^k \X_i > \sum_{i=1}^k \Y_i$
for $1 \le k < n$ is $1/(n{-}1)$, and the proof is as follows.  Let $z = (z_1,\dots,z_{n-1})$ be any generic sequence
of real numbers that sums to 0 (we actually require only that no proper substring of the sequence,
with the subscripts taken modulo $n{-}1$, sums to 0).  Then by Spitzer's Lemma \cite{Sp} there is a unique
rotation\footnote{namely, $(z_k, z_{k+1}, \dots, z_{n-1}, z_1, z_2, \dots, z_{k-1})$ where $k =
\argmin_j \sum_{i=0}^{j-1} z_i$.} of $z$ all of whose partial sums are nonnegative.  Since rotation
of the random vector $\Z = (\Z_1,\dots,\Z_{n-1})$, where $\Z_i = \X_i - \Y_i$, is a probability-preserving
transformation, $\Pr(\sum_{i=1}^k \Z_k \ge 0\text{ for all }1 \le k \le n)  = 1/(n{-}1)$.

Another formulation for this variation, which avoids conditioning on a zero-probability event, is
obtained by having each racer draw $n{-}1$ times from a continuous distribution $\nu$ on $\R$,
then labeling the outcomes least-to-most to get $\U_1 < \dots < \U_{n-1}$ and $\V_1 < \dots < \V_{n-1}$.
Then $\Pr(\bigwedge_{i=1}^{n-1} \U_i > \V_i) = 1/(n{-}1)$.

We will use the same general approach to prove distribution-independence of the ``random race'' without
conditioning.  Again, there is an underlying theorem about generic real numbers (sketches of two proofs
of which were given by Johann W\"astlund in Math Overflow \cite{Wa}) from which independence is deduced;
below, the formula for the value is obtained via a judicious choice of the distribution $\mu$.

\section{Random Race}

\begin{theorem}\label{thm:rr}
Let $\mu$ be an arbitrary continuous probability distribution on the real line $\R$, and
let $\X_1,\dots,\X_n$, $\Y_1,\dots,\Y_n$ be independent drawings from $\mu$.  Then the probability
that $\sum_{i=1}^k \X_i > \sum_{i=1}^k \Y_i$ for all $k$ with $1 \le k \le n$ is ${2n \choose n}/4^n$.
\end{theorem}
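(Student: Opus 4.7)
The plan is to follow the two-step strategy sketched in the introduction: reduce the problem to a combinatorial identity about generic real numbers to obtain distribution-independence, and then pin down the constant value by a convenient computation.

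For the reduction, set $Z_i := \X_i - \Y_i$. The $Z_i$ are iid from a continuous symmetric distribution, and Xavier leads all the way iff $S_k := Z_1 + \cdots + Z_k > 0$ for every $1 \le k \le n$. Decompose $Z_i = \epsilon_i W_i$ with $\epsilon_i := \mathrm{sign}(Z_i) \in \{\pm 1\}$ and $W_i := |Z_i| > 0$; by symmetry of $Z_i$, the $\epsilon_i$ are iid uniform on $\{\pm 1\}$, independent of the iid continuous positive sequence $(W_i)$. Conditioning on the almost-surely generic multiset $\{W_1,\ldots,W_n\}=\{w_1,\ldots,w_n\}$ makes the tuple $(Z_1,\ldots,Z_n)$ uniform over the $2^n\,n!$ signed orderings of $\{w_1,\ldots,w_n\}$. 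The W\"astlund lemma \cite{Wa} asserts that the count $N_n$ of signed orderings with all partial sums strictly positive depends only on $n$, not on the multiset, so the conditional (and hence unconditional) probability is $N_n/(2^n\,n!)$, independent of $\mu$.

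To evaluate $N_n$, I would exploit the lemma's constancy by computing the count on a convenient generic multiset, specifically one in which a single value $w_n$ dominates the others (taking $w_n > w_1 + \cdots + w_{n-1}$ suffices). In that regime any winning signed ordering must place $+w_n$ at some position $k \in \{1,\ldots,n\}$: the positions after $k$ are then unconstrained (since $+w_n$ swamps any later partial deviation), while the positions before $k$ must host a winning signed ordering of some $(k{-}1)$-subset of $\{w_1,\ldots,w_{n-1}\}$. Summing over $k$ and the choice of subset yields the recursion
\[ N_n \;=\; \sum_{k=1}^n \binom{n-1}{k-1}\, N_{k-1}\, (n-k)!\, 2^{n-k}, \qquad N_0 = 1, \]
and a routine induction (equivalent to the identity $\sum_{j=0}^{n-1} \binom{2j}{j}/4^j = 2n\binom{2n}{n}/4^n$) shows this is solved by $N_n = (2n-1)!! = (2n)!/(2^n\,n!)$. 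The probability is then $N_n/(2^n\,n!) = \binom{2n}{n}/4^n$, as claimed.

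The main obstacle is the W\"astlund lemma itself. The set of generic multisets---the complement in $(0,\infty)^n$ of the finitely many hyperplanes on which some signed partial sum vanishes---is typically disconnected, so merely showing that the count is locally constant does not immediately yield the lemma. Without direct access to the sketches in \cite{Wa}, I would attempt an inductive proof combined with an involution that pairs up signed orderings newly winning with those newly losing as one crosses each wall of the degenerate locus, thereby showing the count is preserved across components. This is the analog, for the unconditioned race, of the single cyclic-rotation argument via Spitzer's lemma that sufficed for the tie case in the introduction, but it is more delicate since no single cyclic symmetry can package the whole count.
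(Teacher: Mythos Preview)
Your proposal is correct and structurally mirrors the paper's proof: both reduce to the symmetric-distribution statement via $Z_i=\X_i-\Y_i$, invoke the distribution-independence lemma (Lemma~\ref{lem:gen} in the paper, your ``W\"astlund lemma''), and then evaluate the constant separately. Your sketch for the lemma---an involution matching newly-winning with newly-losing signed orderings as one crosses each wall of the hyperplane arrangement---is exactly what the paper carries out: it moves one element of $A$ at a time toward a reference value, uses genericity to force at most one collision per move, and when the $k$th partial sum flips sign the bijection reverses and negates the first $k$ entries (which is its own inverse). So the part you flag as the main obstacle is handled precisely along the lines you anticipate.

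The only genuine divergence is in pinning down the constant. The paper (Lemma~\ref{lem:exp}) takes $\mu$ exponential and uses memorylessness so that the interleaving of the $2n$ stopping points is a uniformly random $\pm1$ sequence, reducing to the count $\binom{2n}{n}$ of nonnegative lattice paths of length $2n$. You instead exploit the lemma's constancy directly on a generic set with one dominant element to derive the recursion $N_n=\sum_{k}\binom{n-1}{k-1}N_{k-1}(n-k)!\,2^{n-k}$, which does indeed give $N_n=(2n-1)!!$. Both evaluations are clean; the paper's buys a pleasant probabilistic picture but needs the auxiliary path-counting Lemma~\ref{lem:2nchoosen}, while yours stays purely combinatorial and self-contained once the lemma is in hand.
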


Taking $\Z_i = \X_i - \Y_i$ to get a symmetric random variable, Theorem \ref{thm:rr} follows from next
theorem, which (as noted above) appears in \cite{B}:

\begin{theorem}\label{thm:sym}
Let $\sigma$ be an arbitrary symmetric and continuous probability distribution on the real line $\R$, and
let $\Z_1,\dots,\Z_n$ be independent drawings from $\sigma$.  Then the probability
that $\sum_{i=1}^k \Z_i > 0$ for all $k$ with $1 \le k \le n$ is ${2n \choose n}/4^n$.
\end{theorem}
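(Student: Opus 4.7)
The plan is to follow the program flagged in the introduction: first prove that the probability does not depend on $\sigma$, then evaluate the common value at a convenient special case. For distribution-independence I would decompose each step as $Z_i = \epsilon_i T_i$, with $\epsilon_i \in \{\pm 1\}$ i.i.d.\ uniform, $T_i = |Z_i|$ i.i.d.\ from the folded distribution, and the two sequences independent. Conditioning on the unordered multiset $\{T_1,\dots,T_n\} = \{t_1,\dots,t_n\}$, the ordered tuple $(Z_1,\dots,Z_n)$ is uniform over the $2^n n!$ signed permutations of $(t_1,\dots,t_n)$, so
\[
\Pr\!\Bigl(\textstyle\sum_{j=1}^k Z_j > 0\ \forall k\,\Big|\,\{T_i\}=\{t_i\}\Bigr) \;=\; \frac{N(t_1,\dots,t_n)}{2^n n!},
\]
where $N(t_1,\dots,t_n)$ counts the pairs $(\pi,\epsilon) \in S_n \times \{\pm 1\}^n$ with $\sum_{j=1}^k \epsilon_j t_{\pi(j)} > 0$ for every $k$. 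Theorem \ref{thm:sym} thus reduces to proving the combinatorial identity $N(t_1,\dots,t_n) = n!\binom{2n}{n}/2^n$ for all generic positive $t_i$, after which integrating out the $T_i$ yields the theorem.

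\textbf{Invariance of $N$: the main obstacle.} The function $N$ is locally constant on the complement in the positive orthant of the hyperplane arrangement cut out by the equations $\sum_{j=1}^k \epsilon_j t_{\pi(j)} = 0$, so the content of distribution-independence is that crossing any wall of this arrangement does not change $N$. This is where I expect the real work. One needs a status-reversing involution on the set of signed permutations whose $k$-th partial sum vanishes on a given wall, sending the configurations that turn good on one side bijectively to those that turn bad on the other. A natural first attempt is to negate the signs of the initial segment responsible for the vanishing partial sum, but making this map exact requires careful bookkeeping about which of the remaining partial sums are positive along the wall. W\"astlund's Math Overflow post \cite{Wa} supplies two complete arguments---a direct combinatorial bijection and a Spitzer-style cyclic shift on the enlarged multiset $\{\pm t_1, \dots, \pm t_n\}$---either of which I would adapt.

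\textbf{Evaluating the constant.} With invariance of $N$ in hand, it suffices to compute $N$ at one convenient generic vector. I would take $t_i = c^i$ with $c$ large enough that in every partial sum $\sum_{j=1}^k \epsilon_j t_{\pi(j)}$ the term of largest magnitude dominates; its sign is then $\epsilon_{j_k^*}$, where $j_k^* \le k$ is the position at which $\pi$ attains its running maximum. The positivity constraints therefore collapse to the requirement $\epsilon_k = +1$ at exactly the left-to-right maxima of $\pi$, leaving the remaining $n - L(\pi)$ signs free, where $L(\pi)$ denotes the number of left-to-right maxima. Summing over permutations and using the classical generating function $\sum_{\pi \in S_n} x^{L(\pi)} = x(x+1)\cdots(x+n-1)$,
\[
N \;=\; \sum_{\pi \in S_n} 2^{\,n-L(\pi)} \;=\; 2^n \cdot \tfrac{1}{2} \cdot \tfrac{3}{2} \cdots \tfrac{2n-1}{2} \;=\; (2n-1)!! \;=\; \frac{n!}{2^n}\binom{2n}{n}.
\]
Dividing by $2^n n!$ yields the probability $\binom{2n}{n}/4^n$ asserted by Theorem \ref{thm:sym}.
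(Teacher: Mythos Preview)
Your two-step program---first show the count $N(t_1,\dots,t_n)$ of ballot signed permutations is constant on generic tuples, then evaluate at a convenient choice---is exactly the paper's strategy. For the invariance step, the paper carries out essentially the wall-crossing argument you outline: deform one coordinate continuously, observe that $N$ can change only when a single partial-sum hyperplane is crossed, and repair the count there by an involution. The paper's involution is not merely ``negate the initial segment'' (your tentative first attempt) but \emph{reverse and negate} the prefix $(s_1,\dots,s_k)$ whose sum flipped sign; the reversal is what guarantees the new prefix sums are all positive. So your instinct that the naive negation needs adjustment is right, and the fix is this extra reversal.

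Where you genuinely diverge is in the evaluation. The paper computes the common probability by taking $\mu$ exponential: memorylessness makes the merged sequence of $2n$ stopping points a uniformly random $\pm 1$ string, and a separate bijective lemma counts the $\binom{2n}{n}$ nonnegative such strings. Your choice $t_i=c^i$ with $c$ large is a different and perfectly valid route: the sign of each partial sum is governed by the running maximum of $\pi$, the constraint becomes $\epsilon=+1$ at left-to-right maxima, and the rising-factorial identity $\sum_\pi x^{L(\pi)}=x(x+1)\cdots(x+n-1)$ evaluated at $x=1/2$ gives $N=(2n-1)!!$ directly. Your approach is purely combinatorial and bypasses the lattice-path lemma; the paper's approach is more probabilistic and ties back to the ``race'' narrative. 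Both yield $b_n=(2n-1)!!$ and hence $\binom{2n}{n}/4^n$.
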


Theorem~\ref{thm:sym} is a consequence of Lemmas \ref{lem:gen} and \ref{lem:exp}, below.
In them, a finite set $A = \{a_1,\dots,a_n\}$ of reals will be said to be {\em generic} if its elements
are algebraically independent over the rationals, i.e., the equation $\sum_{i=1}^n q_i a_i = 0$, where
each $q_i \in \Q$, can hold only if every $q_i = 0$.  Note that sets of independent samples from a continuous
distribution on $\R$ are generic with probability 1, since only countably many $r \in \R$ lie in the
$\Q$-span of a given set of fewer than $n$ reals.

A sequence $S = (s_1,\dots,s_n)$ is called a {\em signed permutation} of $A$ if there is
a permutation $\pi$ of $\{1,\dots,n\}$ and a sequence $(\eps_1,\dots,\eps_n) \in \{-1,+1\}^n$
such that $s_i = \eps_i a_{\pi(i)}$.  Finally, a signed permutation $(s_1,\dots,s_n)$ has the {\em ballot 
property} if $\sum_{i=1}^k s_i > 0$ for all $1 \le k \le n$.

\begin{lemma}\label{lem:gen}
Let $A = \{a_1,a_2,\dots,a_n\}$ be a generic set of $n$ real numbers.  Then the number of signed permutations
$(s_1,\dots,s_n)$ of $A$ with the ballot property is a function of $n$, i.e., is independent of $A$.
\end{lemma}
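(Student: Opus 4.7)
The plan is to treat $A = (a_1, \dots, a_n) \in \R^n$ as a parameter and track how the count $N(A)$ of signed permutations with the ballot property varies with $A$. For each signed permutation $S$, the $k$-th partial sum is a linear form $v^S_k \cdot A$ with coefficient vector $v^S_k \in \{-1, 0, +1\}^n$. Genericity of $A$ ensures every $v^S_k \cdot A$ is nonzero, so $N(A)$ is locally constant off the arrangement of hyperplanes $H_w = \{A : w \cdot A = 0\}$ indexed by nonzero $w \in \{-1, 0, +1\}^n$. It therefore suffices to show $N(A)$ is preserved by crossing each individual wall $H_w$.

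Fix such a wall, separating the half-spaces $C_\pm = \{A : \pm w \cdot A > 0\}$, and let $B_\pm$ denote the signed permutations with the ballot property on $C_\pm$. A signed permutation $S$ is unaffected by the wall unless some $v^S_k \in \{w, -w\}$. Setting $\alpha := B_+ \setminus B_-$ and $\beta := B_- \setminus B_+$, one checks that $\alpha$ consists of those $S$ with some $v^S_k = w$ and no $v^S_j = -w$, while $\beta$ is described symmetrically with $-w$ and $w$ exchanged; signed permutations having both a $+w$ and a $-w$ among their $v^S_j$ lie in neither $B_+$ nor $B_-$. Consequently $|B_+| - |B_-| = |\alpha| - |\beta|$, and it remains to construct a bijection $\alpha \leftrightarrow \beta$.

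Given $S \in \alpha$, let $k := \min\{j : v^S_j = w\}$ and define $S'$ by reversing and negating the first $k$ entries of $S$: $s'_i := -s_{k+1-i}$ for $i \le k$ and $s'_i := s_i$ for $i > k$. A direct calculation gives $v^{S'}_j = v^S_{k-j} - w$ for $j \le k$ and $v^{S'}_j = v^S_j - 2w$ for $j > k$; in particular $v^{S'}_k = -w$. Since the coordinates of $v^S_j$ lie in $\{-1,0,+1\}$ and $w \ne 0$, neither $v^S_{k-j} = 2w$ nor $v^S_j = 3w$ is possible, so $v^{S'}_j \ne w$ for every $j$. The minimality of $k$ together with the definition of $\alpha$ force $k$ to also be the smallest $j$ with $v^{S'}_j = -w$. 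A case-by-case check, using positivity of the $v^S_i \cdot A$ on $C_+$ and the sign of $w \cdot A$ on $C_-$, shows every partial sum of $S'$ is positive on $C_-$; hence $S' \in \beta$. Applying the same operation to $S'$ with $k'$ chosen analogously recovers $S$, so the map is an involution, completing the bijection.

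The main obstacle is identifying the right involution: naive candidates like negating or reversing the first $k$ entries separately disturb intermediate partial sums in ways incompatible with the ballot property in the opposite chamber. The combined reverse-and-negate operation, with $k$ chosen as the first index where the partial sum vector equals $w$, is tailored so that every partial sum of $S'$ differs from a corresponding partial sum of $S$ by an integer multiple of $w \cdot A$, maintaining positivity uniformly across the wall.
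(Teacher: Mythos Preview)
Your proof is correct and follows essentially the same route as the paper's: both show invariance of the count under continuous deformation of $A$ by checking that crossing a single hyperplane $\{w\cdot A=0\}$ (the paper's ``collision'') preserves the count, and both use the identical ``reverse and negate the first $k$ entries'' involution to match the signed permutations that gain versus lose the ballot property. Your hyperplane-arrangement framing is a clean repackaging of the paper's ``slide $a_1$ toward $u_1$ past one collision at a time''; note incidentally that since $v^S_k$ has exactly $k$ nonzero coordinates there is in fact at most one $k$ with $v^S_k=w$, so your $\min$ is automatically attained uniquely, matching the paper's uniqueness claim.
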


\begin{proof} Fix $n$, and suppose that $A = \{a_1,\dots,a_n\}$ is a generic set of reals.  Let $U = \{u_1,\dots,u_n\}$
be a generic set of reals chosen from the uniform distribution on $[0,1]$.  We will show that $b(A) = b(U)$,
where $b(X)$ is the number of signed permutations of $X$ with the ballot property.  The proof will proceed
by induction on the cardinality of $A \cup U$. 

We can assume all the elements of $A$ are positive (taking absolute values does not affect $b(A)$).
Define the $A$-{\em weight} $w(I)$ of a set $I \subset \{1,\dots,n\}$ to be $\sum_{i \in I} a_i$.  We say
that two distinct, disjoint sets $I$ and $J$ constitute a {\em collision} if they have the same weight.
Being generic, $A$ cannot have a collision, thus nonempty partial sums of a signed permutation $S$ of $A$
cannot be zero.

Suppose we change $A$ to $A'$ by reducing one of its elements, say $a_1$, by the quantity $\delta$.  Then 
$A'$ may have a collision, but not more than one, because if in $A'$, $w'(I) = w'(J)$ and $w'(K) = w'(L)$,
then (relabelling if necessary to put $1 \in J \cap L$) we must have $w(I) - w(J) = \delta = w(K) - w(L) $.  
If $(I,J)$ and $(K,L)$ were different pairs of disjoint sets, there would be some $i>1$ in
exactly one of the four sets or in $I \cap L$ or in $J \cap K$. (Otherwise $(I\cup J\cup L\cup K)\setminus\{1\} \subseteq K\cap I$, contradicting that $(I,J)$ and $(K,L)$ are distinct pairs.)  Then $a_i$ would have coefficient $\pm 1$
or $\pm 2$ in the equation $(w(I) - w(J)) - (w(K) - w(L)) = 0$, contradicting genericity of $A$.

It follows that if we reduce $a_1$ until it becomes equal to (say) $u_1$, it passes through
a finite list $b_1 > \dots > b_m$ of values where there is a single collision, with no collisions in between.
We claim that if there is just one such value $b_1$ and $a_1' \not= b_1$, then $b(A') = b(A)$.
The proof is via bijection from signed permutations of $A$ with the ballot property, and those of $A'$.
Suppose $S = (s_1,\dots,s_n) = (\eps_1 a_{\pi(1)}, \dots, \eps_n a_{\pi(n)})$ has the ballot property.  If
$S' = (s_1',\dots,s_n') = (\eps_1 a'_{\pi(1)}, \dots, \eps_n a'_{\pi(n)})$ does also, we map $S \mapsto S'$.  Otherwise,
there is a $k$ such that $\sum_{i=1}^k s'_i < 0$, and $j < k$ with $s'_j = a'_1$.  Since only one
collision occurred on the way from $A$ to $A'$, $k$ is unique; other partial sums would have had to
pass through 0 to become negative.

Let $S'' = (-s'_k, -s'_{k-1}, -s'_{k-2}, \dots, -s'_1, s'_{k+1}, s'_{k+2}, \dots, s'_n)$.  Then $S''$ has
the ballot property: for $j \le k$, $\sum_{i=1}^j s''_i = -\sum_{i=1}^k s'_i + \sum_{i=1}^{k-j+1}s'_i$
and both of these terms are positive; for $j \ge k$, $\sum_{i=1}^j s''_i = \sum_{i=1}^j s'_i - 2\sum_{i=1}^k s'_i$
and again both terms are positive. We now map $S \mapsto S''$.  That this is a bijection follows easily
from the fact that the same protocol, applied to transforming $A'$ to $A$ by increasing $a'_1$, inverts the
map between signed permutations.

It follows now that $b(A) = b(\{u_1,a_2,\dots,a_n\})$, and by induction, that $b(A) = b(U)$. Thus
$b(A)$ is independent of $A$ as claimed.
\end{proof}

Let $b_n$ be the number of signed permutations with the ballot property, of an $n$-element generic set.
Since i.i.d.\ drawings from a symmetric distribution $\sigma$ are invariant under permutation and sign change,
applying Lemma~\ref{lem:gen} to the set $\{\Z_1,\dots,\Z_n\}$ (where $\Z_i = \X_i - \Y_i$) tells
us that the probability in question is $b_n/(2^n n!)$, independent of $\sigma$ (and therefore of $\mu$ as well).
It remains only to determine this value; of several ways to do this, we choose the following for its intuitive appeal.

\begin{lemma}\label{lem:exp}
Let $\mu$ be an exponential distribution, and let $\X_1,\dots,\X_n$, $\Y_1,\dots,\Y_n$ be independent drawings
from $\mu$.  Then the probability that $\sum_{i=1}^k \X_i > \sum_{i=1}^k \Y_i$ for all $k$ with $1 \le k \le n$
is ${2n \choose n}/4^n$.
\end{lemma}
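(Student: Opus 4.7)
The plan is to exploit the memoryless property of the exponential distribution by viewing $\sum_{i=1}^k \X_i$ and $\sum_{i=1}^k \Y_i$ as the arrival times of two independent rate-$1$ Poisson processes, one for Xavier and one for Yushi. Their superposition is a rate-$2$ Poisson process whose points are independently labeled Xavier or Yushi with probability $1/2$ each. Since the event in the lemma depends only on the relative order of these $2n$ partial sums, I would rephrase it purely in terms of the resulting i.i.d.\ Bernoulli label sequence: for each $k$ with $1 \le k \le n$, the $k$-th Yushi-label must precede the $k$-th Xavier-label.

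The main conceptual step is the next one. I would encode the labels as a simple symmetric random walk $D_m$ with $+1$ for each Xavier-label and $-1$ for each Yushi-label, so that the condition becomes ``$D$ stays $\le 0$ through its $n$-th $-1$ step.'' I would then show this is equivalent to the cleaner, fixed-horizon event ``$D$ never attains $+1$ during its first $2n$ steps'': if $D$ avoids $+1$ for $2n$ steps then at least $n$ of those steps must be $-1$'s, so the $n$-th $-1$ step occurs by time $2n$ and the original condition holds automatically; conversely, if $D$ reaches $+1$ before its $n$-th $-1$ step, then the very next $-1$ step thereafter leaves the walk at a value $\ge 0$, violating the condition. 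This stopping-time collapse, in which an $n$-dependent ``walk stays $\le 0$ until the $n$-th down-step'' condition becomes the unconditioned ``walk stays $\le 0$ for $2n$ steps'' event, is the part I expect to require the most care.

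What remains is to compute $\Pr(\max_{m \le 2n} D_m \le 0)$. I would apply the reflection principle at height $+1$, which puts the $2n$-step paths from $0$ to $-2l$ that visit $+1$ in bijection with unrestricted $2n$-step paths from $0$ to $-2l-2$. Hence the number of $2n$-step paths from $0$ to $-2l$ that stay $\le 0$ is $\binom{2n}{n-l}-\binom{2n}{n-l-1}$, and dividing by $4^n$ and summing over $l \ge 0$ yields a telescoping sum whose value is $\binom{2n}{n}/4^n$, as desired.
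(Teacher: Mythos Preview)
Your approach is essentially the paper's: both use memorylessness to view the merged arrivals as carrying i.i.d.\ Bernoulli$(\tfrac12)$ labels, translate ``Xavier leads throughout'' into the condition that the associated $\pm1$ walk stays on one side of zero for its first $2n$ steps, and then count the good length-$2n$ paths (you via reflection and telescoping, the paper via the bijection of Lemma~\ref{lem:2nchoosen}). One small wording slip in your stopping-time collapse: the ``conversely'' clause should start from ``$D$ hits $+1$ within its first $2n$ steps'' (the negation of the fixed-horizon event) rather than ``before the $n$th $-1$ step''; the point is that the first hitting time of $+1$ is $2j{+}1$ for some $j$, and $2j{+}1\le 2n$ forces $j<n$, so that hit indeed occurs before the $n$th $-1$ step and the random-time condition fails.
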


In order to prove Lemma \ref{lem:exp} we need this  seemingly widely-known but hard-to-find fact:

\begin{lemma}\label{lem:2nchoosen}
There are exactly ${2n \choose n}$ up-down paths of length $2n$ that stay non-negative (that is, sign
sequences in $\{+1, -1\}^{2n}$ with all partial sums non-negative).
\end{lemma}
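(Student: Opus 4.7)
The plan is to partition the non-negative $\pm 1$ paths of length $2n$ according to their terminal height and invoke the reflection principle on each class. A non-negative path of length $2n$ starting at $0$ must end at an even height $2k$ with $0 \le k \le n$ (the endpoint has the same parity as the length), so the total count decomposes as a sum over $k$.

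For fixed $k$, I would count the paths from $0$ to $2k$ of length $2n$ whose partial sums stay $\ge 0$. The total number of paths from $0$ to $2k$ of length $2n$ is $\binom{2n}{n+k}$, obtained by choosing the positions of the $n+k$ up-steps among the $2n$ steps. By Andr\'e's reflection principle, a path from $0$ to $2k$ of length $2n$ that visits $-1$ at some time corresponds bijectively, via reflection of its initial segment through the line $y = -1$, to an unrestricted path from $-2$ to $2k$ of length $2n$, of which there are $\binom{2n}{n+k+1}$. Subtracting gives exactly $\binom{2n}{n+k} - \binom{2n}{n+k+1}$ non-negative paths from $0$ to $2k$.

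Finally I would sum over $k$. The series telescopes:
$$\sum_{k=0}^n \left(\binom{2n}{n+k} - \binom{2n}{n+k+1}\right) \;=\; \binom{2n}{n} - \binom{2n}{2n+1} \;=\; \binom{2n}{n},$$
since $\binom{2n}{2n+1} = 0$. This gives the claimed count.

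There is no real obstacle here; the reflection argument is classical and the telescoping is routine. The only thing to be mindful of is that "non-negative" means partial sums $\ge 0$ rather than $> 0$, so the forbidden region for the reflection is $y \le -1$ and the axis of reflection is $y = -1$ (equivalently, the half-integer line $y = -\tfrac{1}{2}$ in the usual lattice-path picture).
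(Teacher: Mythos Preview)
Your argument is correct: the reflection count $\binom{2n}{n+k}-\binom{2n}{n+k+1}$ for non-negative paths ending at height $2k$ is the standard Andr\'e computation, and the telescoping sum collapses exactly as you say. (One small quibble: the parenthetical about reflecting through $y=-\tfrac12$ is not literally equivalent to reflecting through $y=-1$---the former would send $0$ to $-1$, which has the wrong parity---so I would drop that aside.)

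The paper takes a different route. Rather than stratifying by endpoint and invoking reflection, it exhibits a single explicit bijection between non-negative paths of length $2n$ and paths of length $2n$ that end at $0$ (of which there are obviously $\binom{2n}{n}$): decompose a path ending at $0$ into its minimal zero-to-zero excursions, leave the positive excursions alone, and for each negative excursion flip every step except the last. Your approach is the textbook one and has the advantage of yielding the refined counts at each terminal height as a byproduct; the paper's bijection is slicker in that it avoids any summation or subtraction and makes the equinumerosity visible in one move, at the cost of requiring a moment's thought to see that the map is well-defined and invertible.
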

\begin{proof}
This is easily established by bijection with the set of up-down paths
of length $2n$ that end at 0 (of which there are clearly ${2n\choose n}$).

Given a path of the the latter type, break it up into minimal-length segments that begin and end at $0$ (points where $U$s and $D$s are equinumerous when read from left to right). Those segments that begin with a $U$ are mapped to themselves; those
that begin with a $D$ are inverted, {\em except the last term which remains $U$}.  For example, the up-down path 
$$UUDUDDDDUUDUUUDD$$ is broken up into
$UUDUDD$, $DDUU$, $DU$, and $UUDD$.  These are mapped, respectively, to
$UUDUDD$, $UUDU$, $UU$, and $UUDD$, yielding the target path
$$UUDUDDUUDUUUUUDD,$$
as pictured graphically below.
\end{proof}

\begin{figure}[H]
\centering

\tikzset{every picture/.style={line width=0.75pt}} 

\begin{tikzpicture}[x=0.75pt,y=0.75pt,yscale=-1,xscale=1]

\draw  [draw opacity=0] (200.5,20.04) -- (480.1,20.04) -- (480.1,125.39) -- (200.5,125.39) -- cycle ; \draw  [color={rgb, 255:red, 158; green, 158; blue, 158 }  ,draw opacity=1 ] (200.5,20.04) -- (200.5,125.39)(217.94,20.04) -- (217.94,125.39)(235.38,20.04) -- (235.38,125.39)(252.83,20.04) -- (252.83,125.39)(270.27,20.04) -- (270.27,125.39)(287.71,20.04) -- (287.71,125.39)(305.15,20.04) -- (305.15,125.39)(322.6,20.04) -- (322.6,125.39)(340.04,20.04) -- (340.04,125.39)(357.48,20.04) -- (357.48,125.39)(374.92,20.04) -- (374.92,125.39)(392.37,20.04) -- (392.37,125.39)(409.81,20.04) -- (409.81,125.39)(427.25,20.04) -- (427.25,125.39)(444.69,20.04) -- (444.69,125.39)(462.13,20.04) -- (462.13,125.39)(479.58,20.04) -- (479.58,125.39) ; \draw  [color={rgb, 255:red, 158; green, 158; blue, 158 }  ,draw opacity=1 ] (200.5,20.04) -- (480.1,20.04)(200.5,37.48) -- (480.1,37.48)(200.5,54.92) -- (480.1,54.92)(200.5,72.36) -- (480.1,72.36)(200.5,89.81) -- (480.1,89.81)(200.5,107.25) -- (480.1,107.25)(200.5,124.69) -- (480.1,124.69) ; \draw  [color={rgb, 255:red, 158; green, 158; blue, 158 }  ,draw opacity=1 ]  ;
\draw    (200.5,72.53) -- (235.38,37.64) ;
\draw    (235.38,37.64) -- (252.83,55.08) ;
\draw    (252.83,55.08) -- (270.27,37.64) ;
\draw    (270.27,37.64) -- (305.15,72.53) ;
\draw [line width=1.5]    (305.15,72.53) -- (340.04,107.41) ;
\draw [line width=1.5]    (340.04,107.41) -- (374.92,72.53) ;
\draw [line width=1.5]    (374.92,72.53) -- (392.37,89.97) ;
\draw [line width=1.5]    (392.37,89.97) -- (409.81,72.53) ;
\draw    (409.81,72.53) -- (444.69,37.64) ;
\draw    (444.69,37.64) -- (479.58,72.53) ;
\draw    (305.1,3.2) -- (305.15,18.04) ;
\draw [shift={(305.15,20.04)}, rotate = 269.82] [color={rgb, 255:red, 0; green, 0; blue, 0 }  ][line width=0.75]    (10.93,-4.9) .. controls (6.95,-2.3) and (3.31,-0.67) .. (0,0) .. controls (3.31,0.67) and (6.95,2.3) .. (10.93,4.9)   ;
\draw    (374.87,3.2) -- (374.92,18.04) ;
\draw [shift={(374.92,20.04)}, rotate = 269.82] [color={rgb, 255:red, 0; green, 0; blue, 0 }  ][line width=0.75]    (10.93,-4.9) .. controls (6.95,-2.3) and (3.31,-0.67) .. (0,0) .. controls (3.31,0.67) and (6.95,2.3) .. (10.93,4.9)   ;
\draw    (409.75,3.2) -- (409.8,18.04) ;
\draw [shift={(409.81,20.04)}, rotate = 269.82] [color={rgb, 255:red, 0; green, 0; blue, 0 }  ][line width=0.75]    (10.93,-4.9) .. controls (6.95,-2.3) and (3.31,-0.67) .. (0,0) .. controls (3.31,0.67) and (6.95,2.3) .. (10.93,4.9)   ;

\draw  [draw opacity=0] (200.5,171.04) -- (480.1,171.04) -- (480.1,276.39) -- (200.5,276.39) -- cycle ; \draw  [color={rgb, 255:red, 158; green, 158; blue, 158 }  ,draw opacity=1 ] (200.5,171.04) -- (200.5,276.39)(217.94,171.04) -- (217.94,276.39)(235.38,171.04) -- (235.38,276.39)(252.83,171.04) -- (252.83,276.39)(270.27,171.04) -- (270.27,276.39)(287.71,171.04) -- (287.71,276.39)(305.15,171.04) -- (305.15,276.39)(322.6,171.04) -- (322.6,276.39)(340.04,171.04) -- (340.04,276.39)(357.48,171.04) -- (357.48,276.39)(374.92,171.04) -- (374.92,276.39)(392.37,171.04) -- (392.37,276.39)(409.81,171.04) -- (409.81,276.39)(427.25,171.04) -- (427.25,276.39)(444.69,171.04) -- (444.69,276.39)(462.13,171.04) -- (462.13,276.39)(479.58,171.04) -- (479.58,276.39) ; \draw  [color={rgb, 255:red, 158; green, 158; blue, 158 }  ,draw opacity=1 ] (200.5,171.04) -- (480.1,171.04)(200.5,188.48) -- (480.1,188.48)(200.5,205.92) -- (480.1,205.92)(200.5,223.36) -- (480.1,223.36)(200.5,240.81) -- (480.1,240.81)(200.5,258.25) -- (480.1,258.25)(200.5,275.69) -- (480.1,275.69) ; \draw  [color={rgb, 255:red, 158; green, 158; blue, 158 }  ,draw opacity=1 ]  ;
\draw    (200.5,275.53) -- (235.38,240.64) ;
\draw    (235.38,240.64) -- (252.83,258.08) ;
\draw    (252.83,258.08) -- (270.27,240.64) ;
\draw    (270.27,240.64) -- (305.15,275.53) ;
\draw [line width=1.5]    (305.15,275.53) -- (340.04,240.81) ;
\draw [line width=1.5]    (340.04,240.81) -- (357.48,258.25) ;
\draw [line width=1.5]    (374.92,240.81) -- (409.81,205.92) ;
\draw    (409.81,205.92) -- (444.69,171.04) ;
\draw    (444.69,171.04) -- (479.58,205.92) ;
\draw [line width=1.5]    (357.48,258.08) -- (374.92,240.81) ;

\draw (187,270.4) node [anchor=north west][inner sep=0.75pt]    {$0$};
\draw (188,67.4) node [anchor=north west][inner sep=0.75pt]    {$0$};

\end{tikzpicture}

\end{figure}

\begin{proof}[Proof of Lemma \ref{lem:exp}]
Let $0 < c_1 < c_2 < \dots < c_{2n}$ be the (random) stopping points of Xavier and Yushi during the race.  Since
exponential distributions are memoryless, each $c_i$ is equally likely to be a position of either contestant,
irrespective of the ownership of $c_1,\dots,c_{i+1}$.  Thus the sign sequence $H = (h_1,\dots,h_{2n}) \in 
\{-1,+1\}^{2n}$ defined by $h_i = -1$ iff $c_i$ belongs to Xavier is uniformly random.  Xavier ``leads all
the way'' in the race iff the partial sums $\sum_{i=1}^k h_i$ are nonnegative for all $1 \le i \le 2n$.

There are of course $4^n$ sign sequences of length $2n$ and Lemma \ref{lem:2nchoosen} tells us that exactly
${2n \choose n}$ of them have this partial sum property.
\end{proof}

\smallskip
\noindent
{\bf Remark 1:} There is nothing very special about addition as the mechanism for combining successive moves
of the random race. Any binary operation (say, $\circ$) connected to addition by a homeomorphism $\phi$
from a subset of $\R$ to $\R$---that is, satisfying $\phi(u) + \phi(v) = \phi(u \circ v)$---can replace
addition in Theorem~\ref{thm:rr}.  For example, since the $\log$ function does this job when $\circ$
is multiplication, we could apply Theorem~\ref{thm:rr} to two equity accounts that grow by a random
multiplicative factor each year, and then we know that the probability that one leads the other after
every year for $n$ years is around $2/\sqrt{\pi n}$, independent of the factor's distribution.

\medskip
\noindent
{\bf Remark 2:} Generalization to other events concerning the signs of partial sums from a symmetric distribution
is not evident---indeed, they may be very far indeed from invariance.
An extreme example is the alternation case: what is the probability that Xavier is ahead at all odd times,
Yushi at all even?  In the exponential case, ownership of the stopping points would have to precisely
follow the pattern YXXYYXXY $\ldots$ which has probability $1/4^n$.  But that isn't even the lowest;
by designing $\mu$ to produce, with high probability, samples that are near different powers of (say) 3,
the alternation probability can be made close to $1/n!$.

\section{Discrete Analogues}

If the continuous distribution of Theorem~\ref{thm:rr} is replaced by a discrete distribution,
ties come into play and we can no longer expect that the result will be independent of distribution.
There are, however, several natural cases of interest.

We have seen one discrete case already in our analysis: if the symmetric distribution of Theorem~\ref{thm:sym}
is replaced by a simple random walk on the number line, and the number of steps is $2n$, then the probability that
the walk never dips into the negative is the now-familiar ${2n \choose n}/4^n$.  Since the $2n$th step
can never be the spoiler, the probability is the same for $2n{-}1$ steps.

If we condition on the final position, the problem reduces to the famous ``ballot theorem" of Bertrand \cite{Be},
in which it is determined that if Alice beat Bob by $a$ votes to $b$ in an election, and the votes were
counted in random order, then the probability that Alice led (strictly) from the first ballot on is $(a{-}b)/(a{+}b)$.
Here, for consistency, we again ask for the probability that the ballot-count random walk never puts Bob ahead.
This, by spotting Alice an extra vote at the start, works out to $(a{-}b{+}1)/(a{+}1)$.  In terms of a walk of length $n$
that ends at $t \ge 0$, we deduce that the probability that the walk never dipped below 0 is $2(t{+}1)/(n{+}t{+}2)$.

Let us now return to the racers.  Suppose that, at steps $1, \dots, n$, each racer stays in place or advances by distance 1
with equal probability.  Then their difference will be a {\em Motzkin path}, that is, a sequence of up
($U$), down ($D$), and horizontal ($H$) steps; here, the sequence is random with $\Pr(U) = \Pr(D) = \frac14$
and $\Pr(H) = \frac12$.  The probability that $X$ is never behind in the race is thus the probability that
such a random Motzkin path never dips below 0.

Suppose we map $\{U,D\}$-paths of length $2n$ that begin with $U$ to Motzkin paths as follows: the first and last
steps are dropped, and pairs consisting of an even step followed by odd are transformed by
$UU \mapsto U$, $DD \mapsto D$, and $UD$ or $DU \mapsto H$.  For example, $UUUUDDDUDU = U|UU|UD|DD|UD|U \mapsto
|U|H|D|H| = UHDH$.  The map preserves both probability and the property of never dipping below 0, and we saw
in Lemma~\ref{lem:2nchoosen} that there are ${2n \choose n}$ ``good" $\{U,D\}$ paths out of the $2^{2n-1}$
considered here.  We conclude that the probability that $X$ is never behind is ${2n\choose n}/2^{2n-1}$---twice the
value in Theorem~\ref{thm:rr}, a difference not accounted for by the inclusion of ties.

We end with a direct application of the above computation.  An {\em ordered partitions} of an integer $n$ is a
vector of positive integers that sum to $n$.  If $u$ and $v$ are two such vectors, we say that
$u$ majorizes $v$ if $\sum_{i=1}^k u_i \ge \sum_{i=1}^k v_i$ for $k=1$ up to the length of the
shorter vector.

\begin{theorem}\label{thm:op}
Let $u$ and $v$ be ordered partitions of the positive integer $n$, chosen independently and uniformly at random.
Then the probability that $u$ majorizes $v$ is ${2n\choose n}/2^{2n-1}$.
\end{theorem}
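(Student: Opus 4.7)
The plan is to encode the ordered partitions as binary strings and identify the resulting random walk with the Motzkin walk analyzed in the preceding paragraph. An ordered partition of $n$ is determined by its set of proper partial sums, which is an arbitrary subset of $\{1,\dots,n-1\}$; so uniform sampling of $u$ and $v$ pushes forward to uniform sampling of independent subsets $A, B \subseteq \{1,\dots,n-1\}$. The pairs $(\mathbf{1}[i \in A], \mathbf{1}[i \in B])$ for $i = 1,\dots,n-1$ are i.i.d.\ uniform on $\{0,1\}^2$, so the partial sums $W_i := |A \cap \{1,\dots,i\}| - |B \cap \{1,\dots,i\}|$ form a Motzkin walk whose step is $+1, 0, -1$ with probabilities $\tfrac14, \tfrac12, \tfrac14$, exactly as in the preceding discussion.

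The crux is the equivalence: $u$ majorizes $v$ if and only if $W_i \le 0$ for every $i \in \{1,\dots,n-1\}$. When $|A| > |B|$, the $(|B|+1)$-st partial sum of $u$ lies in $\{1,\dots,n-1\}$ while that of $v$ equals $n$, so majorization fails (consistent with $W_{n-1} = |A| - |B| > 0$). When $|A| \le |B|$, listing $A = \{a_1 < a_2 < \cdots\}$ and $B = \{b_1 < b_2 < \cdots\}$, majorization reduces to $a_k \ge b_k$ for $k \le |A|$, and this matches the walk condition because $|A \cap \{1,\dots,i\}|$ increases only at positions $i = a_k$: the inequality $|B \cap \{1,\dots,a_k\}| \ge k$, equivalent to $b_k \le a_k$, holds for every $k$ if and only if $W_i \le 0$ on all of $\{1,\dots,n-1\}$.

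Since $(A, B)$ and $(B, A)$ are equidistributed, $\Pr(W_i \le 0 \text{ for all } i)$ equals $\Pr(W_i \ge 0 \text{ for all } i)$, which is exactly the Motzkin-walk probability computed in the preceding paragraph via the bijection with good $\{U, D\}$-paths of length $2n$ beginning with $U$ and Lemma~\ref{lem:2nchoosen}; this evaluates to $\binom{2n}{n}/2^{2n-1}$. The main sticking point I anticipate is the bookkeeping of the majorization-to-walk equivalence: because the majorization definition treats $u$ and $v$ asymmetrically once their lengths differ, one must separate the cases $|A| > |B|$ and $|A| \le |B|$, and then confirm that controlling $W$ at the breakpoints $a_k$ is enough to control it throughout $\{1,\dots,n-1\}$.
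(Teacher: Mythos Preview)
Your proof is correct and follows essentially the same route as the paper's: encode an ordered partition by its set of proper partial sums (equivalently, by a binary string of length $n-1$), observe that the resulting pair yields a Motzkin walk of length $n-1$ with step probabilities $\tfrac14,\tfrac12,\tfrac14$, reduce majorization to the walk staying on one side of $0$, and invoke the preceding $\{U,D\}$-path bijection and Lemma~\ref{lem:2nchoosen}. The only cosmetic differences are that your walk $W$ is the negative of the paper's $M$ (hence your extra $(A,B)\leftrightarrow(B,A)$ symmetry step), and that you actually justify the majorization--walk equivalence with the case split $|A|>|B|$ versus $|A|\le|B|$ and the ``check only at the breakpoints $a_k$'' observation, whereas the paper simply asserts it.
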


\begin{proof}
Ordered partitions of $n$ are in one-to-one correspondence with binary sequences of length $n-1$
by transforming each comma to a 1 and each coordinate $u_i$ to a sequence of $u_i-1$ zeros. For example,
$(3, 1, 2) \mapsto 00110$.  Two such sequences, $s$ and $s'$, define a Motzkin path $M$ by
$$M_i = \begin{cases}
    U & s_i<s_i'\\
    H & s_i = s_i'\\
    D & s_i > s_i'~.
\end{cases}$$
The steps of $M$ are independently $U$, $H$, or $D$ with probabilities $1/4$, $1/2$, and $1/4$, respectively;
and $s$ majorizes $s'$ iff $M$ never dips below 0.
\end{proof}

\section{Acknowledgment}
The authors benefited from conversations with Persi Diaconis, Sergi Elizalde, and Martin Tassy.

\end{document}